\documentclass[12 pt]{amsart}
\usepackage{amsmath}
\usepackage{enumerate}
\usepackage{mathtools}
\usepackage{dashrule}
\usepackage{amssymb}
\usepackage{bm}
\usepackage{amssymb}
\usepackage{tabu}
\usepackage{tikz}
\usepackage{hyperref}
\usepackage[T1]{fontenc}
\usepackage{ucs}
\usepackage[utf8]{inputenc}
\usepackage{lmodern}
\usepackage{amssymb}
\usepackage[utf8]{inputenc}
\usepackage[T1]{fontenc}
\usepackage{ucs}

\usepackage{fancyhdr} 
\fancyhf{}
\cfoot{\thepage}
\pagestyle{fancy} 


\theoremstyle{theorem}
\newtheorem{theorem}{Theorem}
\newtheorem{lemma}{Lemma}

\theoremstyle{definition}



\numberwithin{equation}{section}







\begin{document}


\title{Sylvester's theorem and the Non-integrality of a certain Binomial Sum}


 \author{Daniel L\'{o}pez-Aguayo}
 \address{Centro de Ciencias Matem\'{a}ticas
UNAM,Morelia,Mexico}
\email{dlopez@matmor.unam.mx}

 \author{Florian Luca}
 \address{School of Mathematics \\
 University of the Witwatersrand \\
 Private Bag 3 Wits 2050 \\
 Johannesburg, South Africa}
\email{Florian.Luca@wits.ac.za}

\maketitle

\begin{abstract}
In this note, we show that 
$$
S(n,r):=\sum_{k=0}^{n} \frac{k}{k+r} \binom{n}{k}
$$
is not an integer for any positive integer $n$ and $r\in \{1,2,3,4,5,6\}$ and for $n\le r-1$. This gives a partial answer to a conjecture of \cite{3}.
\end{abstract}

\section{Introduction}

Marcel Chiri\unichar{539}\unichar{259} \cite{1} asked to show that
\begin{equation} 
\sum_{k=0}^{n} \frac{k}{k+1}\binom{n}{k}\not\in {\mathbb Z}
\end{equation}
for any integer $n \ge 1$. The first author \cite{3} proved that
$$
\sum_{k=0}^{n} \frac{k}{k+r}\binom{n}{k}
$$
is not an integer for positive integers $n$ and $r\in \{2,3,4\}$ and asked if the above sum is ever an integer for some positive integers $n$ and $r$. Plainly, since
$$
\sum_{k=0}^n \binom{n}{k}=2^n
$$ 
is an integer, the question is equivalent to whether
\begin{equation}
\label{eq:Snr}
S(n,r):=\sum_{k=0}^n \frac{r}{k+r}\binom{n}{k}
\end{equation}
is ever an integer for positive integers $n$ and $r$. For $n=1$, we have $S(n,r)=1+r/(r+1)\in (1,2)$ is not an integer, so we may assume that $n\ge 2$.
Trying out small values of $r$ we find the formulas:
\begin{eqnarray}
\label{eq:1to5}
S(n,1) & = & \frac{2^{n+1}-1}{n+1};\nonumber\\
S(n,2) & = & (-2) \left(\frac{2^{n+1}-1}{n+1}\right)+2\left(\frac{2^{n+2}-1}{n+2}\right);\nonumber\\
S(n,3) & = & 3\left(\frac{2^{n+1}-1}{n+1}\right)-6\left(\frac{2^{n+2}-1}{n+2}\right)+3\left(\frac{2^{n+3}-1}{n+3}\right);\nonumber\\
S(n,4) & = & (-4)\left(\frac{2^{n+1}-1}{n+1}\right)+12 \left(\frac{2^{n+2}-1}{n+2}\right)-12 \left(\frac{2^{n+3}-1}{n+3}\right)\nonumber\\
& + & 4\left(\frac{2^{n+4}-1}{n+4}\right);\\
S(n,5) & = & 5\left(\frac{2^{n+1}-1}{n+1}\right)-20\left(\frac{2^{n+2}-1}{n+2}\right)+30\left(\frac{2^{n+3}-1}{n+3}\right)\nonumber\\
& - & 20\left(\frac{2^{n+4}-1}{n+4}\right)+5\left(\frac{2^{n+5}-1}{n+5}\right);\nonumber\\
S(n,6) & = & (-6) \left(\frac{2^{n+1}-1}{n+1}\right)+30 \left(\frac{2^{n+2}-1}{n+2}\right)-60\left(\frac{2^{n+3}-1}{n+3}\right)\nonumber\\
& + & 60\left(\frac{2^{n+4}-1}{n+4}\right)-30\left(\frac{2^{n+5}-1}{n+5}\right)+6\left(\frac{2^{n+6}-1}{n+6}\right).\nonumber
\end{eqnarray}
At this point we recall the well-known fact that $n$ never divides $2^n-1$ for any $n\ge 2$ (see, for example, problem A14 in \cite{PEN}). 

In particular, 
$(2^{n+1}-1)/(n+1)$ is not an integer which by the first relation \eqref{eq:1to5} deals with the case $r=1$. 

For $r=2$, one of $n+1$ and $n+2$ is odd. 
We assume that $n+1$ is odd, since the case when $n+2$ is odd is similar.
Then, $2(2^{n+1}-1)/(n+1)$ is a rational number, which in its simplest form, has an odd prime $p$ in its denominator. Since $n+1$ and $n+2$ are coprime, we get that $p$ does not divide $n+2$, so $p$ divides the denominator of $S(n,2)$. Hence,  $S(n,2)$ is not an integer. 

For $r=3$, suppose first that $n+1$ is odd. Then so is $n+3$ and one of $n+1,~n+3$ is not a multiple of $3$. Assume $n+1$ is not a multiple of $3$, and the case when $n+3$ is not a multiple of $3$ can be dealt with similarly. Then $3(2^{n+1}-1)/(n+1)$ is a rational number, which in its simplest form, has a prime factor $p\ge 5$ in its denominator. Clearly, $p$ does not divide either one of $n+2,~n+3$, so $p$ divides the denominator of $S(n,3)$. Hence, $S(n,3)$ is not an integer. Assume now that $n+1$ is even. In this case, one of $n+1,~n+3$ is a multiple of $4$, and the other is congruent to $2\pmod 4$, and plainly $n+2$ is odd. The third formula \eqref{eq:1to5} now shows easily that $S(n,3)$ is not a $2$-adic integer in this case. In fact, its denominator as a rational number is a multiple of $4$. This takes care of the case $r=3$.

For $r=4$, either $n+1$ or $n+4$ is odd. We assume that $n+1$ is odd since the case when $n+4$ is odd can be dealt with similarly. Then $n+1$ and $n+3$ are both odd and at most one of them is a multiple of $3$. Thus, there exists $i\in \{1,3\}$ such that $n+i$ is coprime to $6$. Then $c_i(2^{n+i}-1)/(n+i)$ 
is a rational number whose denominator is divisible by a prime $p\ge 5$. Here, $c_i=4$ if $i=1$ and $c_i=12$ if $i=3$. This prime $p$ cannot divide 
$n+j$ for any $j\ne i$, $j\in \{1,2,3,4\}$, therefore $p$ divides the denominator of $S(n,4)$.

For $r=5$, consider first the case when $n+1$ is odd. Then $n+1,~n+3,~n+5$ are all odd. Of these three numbers, at most one is a multiple of $3$ and at most one is a multiple of $5$. Hence, there is $i\in \{1,3,5\}$ such that $n+i$ is coprime to $30$. Then $c_i(2^{n+i}-1)/(n+i)$ is a rational number 
whose denominator is a multiple of a prime $p\ge 7$. Here, $c_i=5,30,5$, for $i=1,3,5$, respectively. The prime $p$ cannot divide $n+j$ for any $j\ne i$, $j\in \{1,2,3,4,5\}$, so $S(n,5)$ is not an integer. Assume now that $n+1$ is even. If $n+1\equiv 2\pmod 4$, then $n+3\equiv 0\pmod 4$ and $n+5\equiv 2\pmod 4$. 
Hence, 
$$
5\left(\frac{2^{n+1}-1}{n+1}\right)+30\left(\frac{2^{n+3}-1}{n+3}\right)+5\left(\frac{2^{n+5}-1}{n+5}\right)
$$
is a rational number which, in its simplest form, has an even denominator. Since $n+2,~n+4$ are odd, it follows that $S(n,5)$ is a rational number with an even denominator. 
Finally, when $n+1\equiv 0\pmod 4$, then $n+3\equiv 2\pmod 4$ and $n+5\equiv 0\pmod 4$. Since $n+1,~n+5$ are both multiples of $4$ whose difference is $4$, it follows that one of them is congruent to $4\pmod 8$ and the other is a multiple of $8$. It now follows that the denominator of $S(n,5)$ is even, and in fact, 
is a multiple of $8$. Hence, $S(n,5)$ is not an integer either.

For $r=6$, one of $n+1$ to $n+6$ is odd. We consider only  the case when $n+1$ is odd since the case when  $n+6$ is odd is similar. Then $n+1,~n+3,~n+5$ are all odd and at most one of them is a multiple of $3$ and at most one of them is a multiple of $5$. Hence, there is $i\in \{1,3,5\}$ such that $n+i$ is coprime to $30$, so, in particular, $c_i(2^{n+i}-1)/(n+i)$ is a rational number whose denominator is divisible by a prime $p\ge 7$. Here, $c_i=6,60,30$, for $i=1,3,5$, respectively. Clearly, $p$ cannot divide $n+j$ for $j\ne i$, $j\in \{1,2,3,4,5,6\}$, therefore $S(n,6)$ is a rational number whose denominator is a multiple of $p$.    

\medskip

So far, we reproved the main result from \cite{3} and even proved the cases $r=5$ and $r=6$. In order to extend our argument to cover all $r$, we need two ingredients:
\begin{itemize}
\item[(i)] A general formula of the shape of (1.2) valid for $n$ and $r$;
\item[(ii)] A statement about prime factors of consecutive integers, namely that under some mild hypothesis, out of every $r$ consecutive integers there is one of them divisible by a prime larger than $r$.
\end{itemize}

The next statement takes care of (i) and, in particular, justifies formulas \eqref{eq:1to5}.

\begin{lemma}
We have
\begin{equation}
\label{eq:formula}
S(n,r)=\sum_{j=0}^{r-1} (-1)^{r-1-j} r \binom{r-1}{j} \left(\frac{2^{n+j+1}-1}{n+j+1}\right).
\end{equation}
\end{lemma}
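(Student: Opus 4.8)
The plan is to convert both sides into the same definite integral using the elementary identity $\frac{1}{k+r}=\int_0^1 x^{k+r-1}\,dx$. Applying this to each summand of $S(n,r)=\sum_{k=0}^n \frac{r}{k+r}\binom{n}{k}$ and interchanging the (finite) sum with the integral, I would obtain
$$
S(n,r)=r\int_0^1 x^{r-1}\sum_{k=0}^n \binom{n}{k}x^k\,dx=r\int_0^1 x^{r-1}(1+x)^n\,dx,
$$
where the inner sum collapses by the binomial theorem. This gives a clean closed integral representation of the left-hand side that no longer refers to the index $k$.

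Next I would attack the right-hand side by recognizing that its typical factor is itself such an integral: since $\int_0^1 (1+x)^{n+j}\,dx=\frac{(1+x)^{n+j+1}}{n+j+1}\big|_0^1=\frac{2^{n+j+1}-1}{n+j+1}$, the claimed sum equals
$$
\sum_{j=0}^{r-1}(-1)^{r-1-j}r\binom{r-1}{j}\int_0^1 (1+x)^{n+j}\,dx
=r\int_0^1 (1+x)^n\sum_{j=0}^{r-1}(-1)^{r-1-j}\binom{r-1}{j}(1+x)^j\,dx.
$$
The whole argument then hinges on the key algebraic step: writing $x^{r-1}=\bigl((1+x)-1\bigr)^{r-1}$ and expanding by the binomial theorem gives exactly
$$
x^{r-1}=\sum_{j=0}^{r-1}(-1)^{r-1-j}\binom{r-1}{j}(1+x)^j,
$$
so the integrand on the right-hand side coincides with $r\,x^{r-1}(1+x)^n$, and the integral matches the expression for $S(n,r)$ derived above.

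Thus both sides equal $r\int_0^1 x^{r-1}(1+x)^n\,dx$, which proves \eqref{eq:formula}. I expect no real analytic obstacle here: the interchange of sum and integral is trivial because the sums are finite, and the only genuine idea is the substitution $x^{r-1}=((1+x)-1)^{r-1}$ that re-expands a power of $x$ in powers of $1+x$. The one point demanding care is bookkeeping of the signs and the binomial coefficient $\binom{r-1}{j}$ (rather than $\binom{r}{j}$), together with checking that the factor $r$ is placed consistently; getting the exponent on $(-1)$ to read $r-1-j$ is exactly what makes the expansion of $((1+x)-1)^{r-1}$ come out right. As a sanity check, specializing to $r=1,\dots,6$ should reproduce the formulas in \eqref{eq:1to5}, which would confirm the constants and signs.
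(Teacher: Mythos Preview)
Your argument is correct and is essentially identical to the paper's own proof: both use $\frac{1}{k+r}=\int_0^1 x^{k+r-1}\,dx$ to get $S(n,r)=r\int_0^1 x^{r-1}(1+x)^n\,dx$, then expand $x^{r-1}=((1+x)-1)^{r-1}$ by the binomial theorem and integrate term by term. The only cosmetic difference is that the paper writes the computation as a single chain of equalities from left to right, whereas you transform the two sides separately to the common integral; the content is the same.
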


\begin{proof}
\begin{eqnarray*}
S(n,r) & = & r\sum_{k=0}^n \binom{n}{k} \frac{1}{k+r}=  r\sum_{k=0}^n \binom{n}{k} \int_0^1 x^{k+r-1} dx\\
& = & r\int_0^1 \left(\sum_{k=0}^n \binom{n}{k} x^{k+r-1}\right) dx=  r\int_0^1 \left(\sum_{k=0}^n \binom{n}{k} x^k\right) x^{r-1} dx\\
& = & r\int_0^1 (1+x)^n x^{r-1} dx  =  r\int_0^1 (1+x)^n (1+x-1)^{r-1} dx \\
& = &  r\int_0^1 (1+x)^n \left(\sum_{j=0}^{r-1} (-1)^{r-1-j} \binom{r-1}{j} (1+x)^j \right) dx\\
& = & \int_0^1 \left(\sum_{j=0}^{r-1} (-1)^{r-1-j} r\binom{r-1}{j}  (1+x)^{n+j}\right) dx  \\
&= & \sum_{j=0}^{r-1} (-1)^{r-1-j} r\binom{r-1}{j} \int_{0}^1 (1+x)^{n+j} dx\\
& = & \sum_{j=0}^{r-1} (-1)^{r-1-j} r\binom{r-1}{j} \left(\frac{2^{n+j+1}-1}{n+j+1}\right).
\end{eqnarray*}
\end{proof}

For (ii), let us recall Sylvester's extension of Bertrand's postulate (see \cite{2}).

\begin{theorem}
If $n \ge r\ge 2$, then one of the numbers $n+1,~n+2,~\ldots,~n+r$ is divisible by a prime larger than $r$.
\end{theorem}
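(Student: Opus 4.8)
The plan is to reduce the statement to a divisibility property of a single binomial coefficient and then run Erdős's elementary size argument. First I would note that the conclusion is equivalent to the assertion that $\binom{n+r}{r}$ has a prime factor larger than $r$. Indeed,
\[
(n+1)(n+2)\cdots(n+r)=r!\binom{n+r}{r},
\]
and since no prime $p>r$ divides $r!$, such a $p$ divides the left-hand product---equivalently, divides one of the consecutive integers $n+1,\ldots,n+r$---if and only if $p\mid\binom{n+r}{r}$. Writing $N:=n+r$, the hypothesis $n\ge r$ becomes $N\ge 2r$, and the goal is to show that $\binom{N}{r}$ has a prime factor exceeding $r$.

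To prove this, I would argue by contradiction, assuming that every prime dividing $\binom{N}{r}$ is at most $r$, and then derive a contradiction from competing upper and lower bounds on the size of $\binom{N}{r}$. The two standard inputs are: first, Legendre's formula, which gives that if $p^{a_p}$ exactly divides $\binom{N}{r}$ then $a_p=\sum_{i\ge 1}\bigl(\lfloor N/p^i\rfloor-\lfloor r/p^i\rfloor-\lfloor (N-r)/p^i\rfloor\bigr)$ is a sum of terms each equal to $0$ or $1$ that vanish once $p^i>N$; hence $p^{a_p}\le N$, and in particular $a_p\le 1$ whenever $p>\sqrt{N}$. Second, the Chebyshev-type primorial estimate $\prod_{p\le x}p<4^{x}$. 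Splitting the admissible primes $p\le r$ into those at most $\sqrt{N}$ (each contributing a factor at most $N$, with at most $\pi(\sqrt{N})$ such primes) and those exceeding $\sqrt{N}$ (each contributing a single prime, controlled by the primorial bound), one obtains an upper estimate for $\binom{N}{r}$ that is essentially subexponential in $N$. Matching this against an exponential lower bound such as $\binom{N}{r}\ge (N/r)^{r}\ge 2^{r}$, valid since $N\ge 2r$, forces a contradiction once the parameters are large.

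The main obstacle is precisely making these two estimates close enough to collide for \emph{all} pairs with $N\ge 2r\ge 4$, rather than merely asymptotically: the crude grouping above leaves a transitional regime (notably when $r$ is small relative to $N$) where the bounds do not immediately contradict one another. Overcoming this is the delicate heart of Sylvester's theorem and is exactly what Erdős's refinement of his proof of Bertrand's postulate accomplishes, by introducing a finer partition of the prime ranges together with a vanishing ``middle'' range, and by checking the remaining finitely many small cases $(N,r)$ directly. Since the result is classical, an acceptable alternative is simply to invoke Erdős's proof, which is the route taken here via the citation.
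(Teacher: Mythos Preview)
The paper does not give its own proof of this theorem: it simply states Sylvester's extension of Bertrand's postulate and cites Erd\H{o}s \cite{2}. Your proposal ultimately does the same---after sketching the standard Erd\H{o}s argument via the prime factorization of $\binom{n+r}{r}$, Legendre's formula, and the primorial bound, you acknowledge that closing the estimates for all $(N,r)$ is the hard part and defer to the cited proof. So your approach and the paper's are effectively identical (invoke the classical result by reference); your additional sketch is correct and informative but goes beyond what the paper itself supplies.
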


However, Sylvester's theorem is not enough to prove that $S(n,r)$ is not an integer for any $n$ and $r$, even when $n\ge r$, because although we infer that there exists 
$i\in \{1,2,\ldots,r\}$ such that $p\mid n+i$ for some prime $p>r$, and $n+i$ does not divide $2^{n+i}-1$, it is still possible that 
$c_i(2^{n+i}-1)/(n+i)$ is a rational number whose denominator is {\bf not} divisible by $p$, and therefore we cannot infer that $p$ divides the denominator of $S(n,r)$. 
However, Sylvester's theorem is enough to deal with the case $n\le r-1$. Namely, in this case, we work directly with the original representation of \eqref{eq:Snr}, which is
$$
S(n,r)=1+\sum_{j=1}^n \frac{r}{r+j} \binom{n}{j}.
$$
If $r+1>n$, then, again by Sylvester's theorem, one of the numbers $r+1,r+2,\ldots,r+n$ is divisible by a prime $p>n$. Such a prime does not divide
$\binom{n}{j}$ for any $j\in \{1,\ldots,n\}$, and does not divide $r$ either (otherwise, it divides both $r$ and $r+j$ for some $j\in \{1,\ldots,n\}$, so it divides their difference, which is a number $\le n$, a contradiction).  So, it remains to deal with $r=n+1$. In this case, we apply Bertrand's postulate, to conclude that there is a prime $p\in ((n+1),2n+1]$. This prime divides neither $n+1$ nor $\binom{n}{j}$ for $j\in \{1,\ldots,n\}$, so $p$ divides the denominator of $S(n,n+1)$. 

To summarize, in this note we proved, in addition to formula \eqref{eq:formula}, the following partial results towards the conjecture that $S(n,r)$ is not an integer for any positive integers $n$ and $r$:

\begin{theorem}
\
\begin{itemize}
\item[(i)] $S(n,r)$ is not an integer for any $r\in \{1,2,3,4,5,6\}$ and $n\ge 2$;
\item[(ii)] $S(n,r)$ is not an integer for $1\le n\le r-1$. 
\end{itemize}
\end{theorem}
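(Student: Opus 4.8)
The plan is to prove the two parts by separate routes: part~(i) rests on the closed form \eqref{eq:formula}, while part~(ii) rests on the elementary representation of $S(n,r)$ together with Sylvester's theorem and Bertrand's postulate. For part~(i) I would combine \eqref{eq:formula} with the classical fact that $m\nmid 2^m-1$ for every $m\ge 2$, which guarantees that each factor $(2^{n+j+1}-1)/(n+j+1)$ has, in lowest terms, a denominator exceeding $1$. The strategy is to isolate a single prime $p$ that lies in the denominator of exactly one summand of \eqref{eq:formula} and divides none of the other arguments $n+j+1$; the remaining summands are then $p$-adic integers, so $p$ persists in the denominator of $S(n,r)$ and integrality fails. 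To produce such a $p$ I would fix the parity of $n$, reducing by symmetry to the case that $n+1$ is odd, and inspect the odd elements of $\{n+1,\dots,n+r\}$. Using that at most one of them is a multiple of $3$ and at most one a multiple of $5$, one finds an index $i$ for which $n+i$ is coprime to every prime dividing the coefficients $r\binom{r-1}{j}$; for $r\le 6$ these coefficient primes form a subset of $\{2,3,5\}$ that already contains every prime at most $r-1$. The reduced denominator of $(2^{n+i}-1)/(n+i)$ is then a divisor of $n+i$ exceeding $1$, hence has a prime factor $p$, and since $p\mid n+i$ this $p$ is coprime to the coefficient primes and therefore satisfies $p>r-1$. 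Consequently $p$ divides neither $r\binom{r-1}{i}$ nor any other argument $n+j+1$, so $p$ survives in the denominator of $S(n,r)$, as required.

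The genuinely delicate cases of part~(i) are those in which the available odd arguments are all consumed by the small primes and no large odd prime survives; a count shows these occur only for $r=3$ and $r=5$ when $n+1$ is even (for the even values of $r$ the opposite parity is symmetric and needs nothing new). Here I would drop the search for an odd prime and instead compute the $2$-adic valuation of the denominator directly. Among the even arguments one is congruent to $2\pmod{4}$ and another is divisible by $4$---and, in one of the residue patterns occurring for $r=5$, by $8$. When two even arguments share the same $2$-adic valuation one must verify that their summands combine to a $2$-adic integer rather than cancel: for instance, for $r=5$ with $n+1\equiv 2\pmod{4}$ the two outer summands pair off into a $2$-adic integer, leaving the middle summand, whose argument is divisible by $4$, to force an even denominator. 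Weighing these valuations against those of the even coefficients $r\binom{r-1}{j}$ then shows that $4$, respectively $8$, divides the denominator of $S(n,r)$. I expect this $2$-adic bookkeeping to be the principal obstacle, since it is the one place where the clean ``large prime'' mechanism breaks down.

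For part~(ii), with $2\le n\le r-1$, I would discard \eqref{eq:formula} in favour of the original shape
$$
S(n,r)=1+\sum_{j=1}^{n}\frac{r}{r+j}\binom{n}{j}.
$$
Since $r\ge n+1\ge n\ge 2$, Sylvester's theorem applies to the $n$ consecutive integers $r+1,\dots,r+n$ and yields a prime $p>n$ dividing one of them, say $p\mid r+i$. Being larger than $n$, this $p$ divides no $\binom{n}{j}$, and it cannot divide $r$, for otherwise it would divide $(r+i)-r=i\le n<p$. Hence the summand with $j=i$ carries $p$ in its denominator while the leading $1$ and every other summand are $p$-adic integers, so $S(n,r)\notin\mathbb{Z}$. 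The extreme case $r=n+1$ is most transparent via Bertrand's postulate, which furnishes a prime $p$ with $n+1<p\le 2n+1$: such a $p$ lies in $\{r+1,\dots,r+n\}$ and divides neither $r=n+1$ nor any $\binom{n}{j}$, giving the same conclusion. Finally, $n=1$ needs no machinery at all, since $S(1,r)=1+r/(r+1)\in(1,2)$.
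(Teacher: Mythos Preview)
Your proposal is correct and follows essentially the same approach as the paper: for part~(i) you use formula~\eqref{eq:formula}, isolate a summand whose reduced denominator carries a prime not dividing any coefficient or any other argument, and fall back to a direct $2$-adic valuation count for the exceptional cases $r\in\{3,5\}$ with $n+1$ even; for part~(ii) you apply Sylvester's theorem (with Bertrand for the boundary case $r=n+1$) to the original representation, exactly as the paper does. The only cosmetic difference is that you package the six values of $r$ into a single argument via the observation that the coefficient primes all lie in $\{2,3,5\}$, whereas the paper treats each $r$ in turn.
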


\end{document}